\documentclass[11pt]{amsart}
\usepackage{inputenc}
\usepackage[hmarginratio={1:1},
  vmarginratio={1:1}, vmargin = 2.5cm, hmargin = 2cm]{geometry}

\usepackage{graphicx} 
\usepackage{subcaption}
\usepackage[export]{adjustbox}
\usepackage{amsmath}
\usepackage{amssymb}
\usepackage{mathrsfs}
\usepackage{xcolor}
\usepackage{algorithm}
\usepackage{algpseudocode}
\usepackage{svg}
\usepackage{url}
\usepackage{tabularx}
\usepackage{multicol}
\usepackage{epstopdf}

\newtheorem{theorem}{Theorem}[section]

\numberwithin{equation}{section}

\newcommand{\Sp}{\mathbb S}
\newcommand{\R}{\mathbb R}

\begin{document}

\title{Improved bounds for the double cap conjecture}

\author{Domonkos Czifra, \'Akos D\'ucz, M\'at\'e Matolcsi, D\'aniel Varga, P\'al Zs\'amboki}

\address{Czifra Domonkos, HUN-REN Alfr\'ed R\'enyi Institute of Mathematics, Re\'altanoda utca 13-15, 1053 Budapest, Hungary}
\email{czifra.domonkos@renyi.hu}

\address{\'Akos D\'ucz, HUN-REN Alfr\'ed R\'enyi Institute of Mathematics, Re\'altanoda utca 13-15, 1053 Budapest, Hungary}
\email{akos@renyi.hu}

\address{M\'at\'e Matolcsi, HUN-REN Alfr\'ed R\'enyi Institute of Mathematics, Re\'altanoda utca 13-15, 1053 Budapest, Hungary
and 
Department of Analysis and Operations Research,
Institute of Mathematics,
Budapest University of Technology and Economics,
M\H uegyetem rkp. 3., H-1111 Budapest, Hungary
}
\email{matomate@renyi.hu}

\address{D\'aniel Varga, HUN-REN Alfr\'ed R\'enyi Institute of Mathematics, Re\'altanoda utca 13-15, 1053 Budapest, Hungary}
\email{daniel@renyi.hu}

\address{P\'al Zs\'amboki, HUN-REN Alfr\'ed R\'enyi Institute of Mathematics, Re\'altanoda utca 13-15, 1053 Budapest, Hungary}
\email{zsamboki.pal@renyi.hu}

\date{March 2026}

\begin{abstract}

In 1974, Witsenhausen asked for the maximum possible density $\alpha_n$ of a measurable subset $A$ of the unit sphere $\mathbb{S}^{n-1}\subset \mathbb{R}^n$ such that $A$ contains no pair of orthogonal vectors. For $n=3$, the best known lower bound is $1 - 1/\sqrt{2} = 0.29289\dots$, obtained from the natural ``double cap'' construction of two opposite spherical caps. This construction was conjectured by Gil Kalai \cite{kalai} to be optimal for all $n$. In this paper, we use a novel approach to establish the upper bound $\alpha_3\le 0.2953$, improving on the previous best known bound of $0.2977$ due to Bekker et al. \cite{bekker}. Our approach combines arguments from harmonic analysis with the concept of the geometric fractional chromatic number of finite graphs, introduced recently by Ambrus et al. \cite{ambrus}. Within this framework, any finite subset of the sphere yields an upper bound for $\alpha_n$; we obtain our bound by identifying a suitable 33-element point set through a large-scale computer search.

The same method can also be used in higher dimensions to yield potential improvements of the best known bounds. 
\end{abstract}


\maketitle

\section{Introduction}

A measurable subset $A$ of the unit sphere $\Sp^{n-1} = \{ x\in  \R^n : \ \|x\| = 1 \}$ is said to {\it avoid orthogonality} if $\langle a, a'\rangle \ne 0$ for all $a, a'\in A$.  Witsenhausen \cite{wits} posed the problem of finding the maximum possible density of such a set $A\subset \Sp^{n-1}$. That is, we aim to determine the value of
\begin{equation}\label{an}
\alpha_n = \sup \{ \omega(A)/\omega_n \ : \ A \subseteq \Sp^{n-1} \ \textrm{is \  measurable \ and \ avoids \ orthogonality}\},
\end{equation}
where $\omega$ is the surface measure of 
the sphere, and $\omega_n$ is the total measure of the sphere.

\medskip

The best known lower bound for $\alpha_n$ comes from the natural construction of two open antipodal spherical caps of spherical radius $\pi/4$. This 
``double cap'' construction is conjectured to be optimal for all $n$ by Gil Kalai \cite{kalai}. The conjecture is open for all $n\ge 3$. If true, it would imply that $\alpha_n=(\sqrt{2}+o(1))^{-n}$.

\medskip

As for upper bounds, Witsenhausen \cite{wits} observed the trivial bound $\alpha_n\le 1/n$ by considering an orthonormal  system of $n$ unit vectors, and an averaging argument over the sphere. The first exponentially decreasing bound, $\alpha_n\le (1+o(1))1.13^{-n}$ was given by Frankl and Wilson \cite{fs}, which was later improved  to $\alpha_n\le (1+o(1))1.225^{-n}$ by Raigorodskii \cite{rai}. 

\medskip

Besides progress on the asymptotics of $\alpha_n$, there have been several improvements on upper bounds in low dimensions. For $n=3$, the first improvement on the trivial bound $\alpha_3\le 1/3$ was obtained by a harmonic analytic version of Delsarte's linear programming bound combined with some combinatorial constraints by DeCorte and Pikhurko \cite{dp}, who proved $\alpha_3\le 0.313$. The bound was later improved to $\alpha_3\le 0.30153$ by DeCorte, Oliveira and Vallentin \cite{dov} by incorporating complete positivity constraints, and most recently to $\alpha_3\le 0.2977$ by Bekker, Oliveira, Kuryatnikova and Vera \cite{bekker} by similar methods. The main result of this note is to further improve the upper bound on $\alpha_3$ by a novel approach of combining  harmonic analytic arguments with the notion of the geometric fractional chromatic number of finite graphs introduced by Ambrus et al.  \cite{ambrus}. 

\begin{theorem}\label{main}
For the Witsenhausen problem in dimension 3 we have $\alpha_3\le 0.2953$.
\end{theorem}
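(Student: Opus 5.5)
The plan is to bound $\alpha_3$ by averaging over rotations of a finite point configuration. The geometric fractional chromatic number of Ambrus et al. converts combinatorial information about that configuration into a density bound, and harmonic analysis sharpens it.

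The basic averaging step works as follows. Fix a finite set $P=\{p_1,\dots,p_N\}\subset \Sp^2$ and let $G_P$ be its orthogonality graph, with $p_i\sim p_j$ iff $\langle p_i,p_j\rangle=0$. If $A$ avoids orthogonality, then for every rotation $\rho\in SO(3)$ the set $\{i:\rho p_i\in A\}$ is independent in $G_P$. Integrate over Haar measure on $SO(3)$. We get $\omega(A)/\omega_3=\int \frac1N\sum_i 1_A(\rho p_i)\,d\rho$, which is at most $\alpha(G_P)/N$. More generally, for weights $w_i\ge 0$ with $\sum w_i=1$, the density is at most the maximum over independent sets $I$ of $\sum_{i\in I}w_i$. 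This is the reciprocal of the fractional chromatic number.

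On its own this cannot beat $1/3$ by much, so I would strengthen it using the "geometric" refinement. The key observation is that not every independent set of $G_P$ can actually be realised by a rotated copy of $P$ meeting an orthogonality-avoiding set with large density. Also, the distribution of the pattern $\{i:\rho p_i\in A\}$ over $\rho$ is constrained by the two-point statistics of $A$.

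Concretely, for each independent set $I$ let $\lambda_I$ be the Haar measure of rotations with pattern exactly $I$. Then $\sum_I\lambda_I=1$, and $\sum_{I\ni i}\lambda_I=\delta:=\omega(A)/\omega_3$ for every $i$. For each pair $i,j$, $\sum_{I\ni i,j}\lambda_I=f(\langle p_i,p_j\rangle)$, where $f(t)$ is the probability that two random points at inner product $t$ both lie in $A$.

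The function $f$ is the autocorrelation of $1_A$. It has the Gegenbauer/Legendre expansion $f(t)=\sum_k a_k P_k(t)$ with $a_k\ge 0$, $a_0=\delta^2$, and $\sum_k a_k=f(1)=\delta$. Moreover $f(0)=0$.

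This yields a finite-dimensional LP (or SDP) in the variables $\lambda_I$ and finitely many moments of $f$. The objective is to maximise $\delta$ subject to the following:
\begin{itemize}
\item the consistency constraints above;
\item positivity of the Legendre coefficients;
\item $f(0)=0$, together with any further linear constraints $f(t)\le \delta$ and $f(t)\ge 2\delta-1$.
\end{itemize}
The dual of this program is a certificate: a polynomial combination of Legendre functions plus weights on $P$ yielding $\delta\le 0.2953$.

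In practice I would truncate the Legendre expansion at some degree $d$. The tail would be handled by the bound $|P_k|\le 1$ and $\sum_{k>d}a_k\le \delta-\sum_{k\le d}a_k$. The program would then be solved with $P$ a well-chosen configuration.

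The main obstacle is finding $P$. Its inner products must simultaneously generate many orthogonality constraints and sit at values where the Legendre constraints bite; the abstract's hint is a 33-point set. I would first try symmetric configurations built from orthogonal frames: cube/octahedron directions, $\{0,\pm1,\pm2\}$-type integer vectors, and unions of rotated orthonormal triples. After that I would run a local or stochastic search, perturbing points while keeping designated orthogonal pairs and re-solving the LP at each step. Finally, the dual solution must be rationalised so that the bound $0.2953$ is rigorously verified, for example with exact arithmetic on the finite LP and an explicit tail estimate.
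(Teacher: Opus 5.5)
Your architecture is the paper's: a distribution on independent sets of a finite configuration, Legendre linking of the pair statistics, weak duality, a computer search, and a rationalised certificate. However, the way you propose to handle the Legendre tail makes the harmonic part vacuous.

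\textbf{The tail.} If for $k>d$ you only use $|P_k|\le 1$, then for every $d$ your truncated program has value at least that of the purely combinatorial program. To see this, take any distribution $(\lambda_I)$ on independent sets with all vertex marginals equal to $\delta$, and with pair marginals $\sum_{I\ni i,j}\lambda_I=m(t)$ depending only on $t=\langle p_i,p_j\rangle$. Put $a_0=\delta^2$ and $a_k=0$ for $1\le k\le d$. Let the tail, of mass $\delta-\delta^2$, take the value $m(t)-\delta^2$ at each occurring $t$, and $-\delta^2$ at $t=0$. Since $0\le m(t)\le\delta\le 1/2$ (any edge of $G_P$ forces $\delta\le 1/2$), these values lie in $[-(\delta-\delta^2),\,\delta-\delta^2]$. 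So $f(1)=\delta$, $f(0)=0$, all pair constraints and your extra inequalities hold. Hence nothing certified this way beats what the $\lambda_I$ give without any harmonic analysis.

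The missing ingredient is the decay of $P_k(\cos\theta)$ for $\theta\neq 0,\pi$. The paper uses $|P_k(\cos\theta)|\le\frac{2}{\pi}\int_0^{\pi/2}(1-\sin^2\theta\cos^2\phi)^{k/2}\,d\phi$, so the left-hand side of the dual constraint tends to $\sum_i w_{ii}$, the only terms with $P_k(1)=1$. The certificate must therefore have $\sum_i w_{ii}>1$. One then finds $k_0$ beyond which the decaying off-diagonal bound is below $\sum_i w_{ii}-1$. Every even $k<k_0$, with $k_0$ in the thousands, is checked exactly by interval arithmetic through the three-term recurrence.

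\textbf{The congruence constraints.} The program you actually write down records only one- and two-point statistics; the remark that some independent sets are not ``realisable'' is never turned into a constraint. The geometric refinement the paper takes from Ambrus et al.\ is the Haar-invariance constraint $\sum_{I\supseteq Y}\lambda_I=\sum_{I\supseteq Z}\lambda_I$ for every pair of congruent vertex subsets $Y\cong Z$ of the configuration, each with its own dual multiplier $s_{Y,Z}$. For $|Y|\le 2$ these are implied by your linking to $f$, so the new information lies in congruent triples and larger subsets. Without them you are solving a relaxation of the paper's program, and nothing indicates that it reaches $0.2953$.

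This is also why the search must keep exact algebraic coordinates rather than perturbing points continuously, which destroys congruences. The paper builds configurations with homogeneous coordinates in $\mathbb{Z}[\sqrt5]$ by adding cross products of existing vertices, so orthogonalities and congruences are decided exactly.

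Finally, symmetrize: take $A=-A$ and let vertices be antipodal pairs. This kills the odd Legendre coefficients, so the dual inequalities are needed only for even $k$. With these repairs your outline becomes the paper's argument, whose real content is the explicit 33-pair configuration and its verified rational certificate.
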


\medskip

The structure of the paper is the following. In Section \ref{sec2}, we describe how the geometric fractional chromatic number (GFCN) of a finite set $X\subset \Sp^{n-1}$ fits into the framework of upper bounding $\alpha_n$. In Section \ref{sec3}, we link up the notion of GFCN with the harmonic analytic version of Delsarte's linear programming bound, and show how weak duality can produce a verifiable dual witness to our upper bound. This gives the method of the proof of Theorem \ref{main}.  Finally, the Appendix contains essential auxiliary ingredients, such as the description of the computer search which resulted in an appropriate finite set $X\subset \Sp^{n-1}$ consisting of 33 antipodal pairs, and the steps of the symbolic verification of our dual witness.

\section{The geometric fractional chromatic number}
\label{sec2}

The notion of the geometric fractional chromatic number, introduced in \cite{ambrus}, is a very effective tool in several problems with underlying  symmetries. In this section, we describe how this concept fits into the framework of Witsenhausen's problem. 

\medskip

We first make a technical remark, and introduce some terminology. If $A\subset \Sp^{n-1}$ avoids orthogonality, then so does the symmetrized set $A\cup -A$, of possibly larger measure. As such, we will assume without loss of generality, throughout the paper, that $A$ is symmetric, $A=-A$. Similarly, in the forthcoming arguments, whenever we consider a finite subset $X\subset \Sp^{n-1}$, we always assume that $X$ is symmetric, $X=-X$. As such, $X$ can be written as a union of antipodal pairs, $X=\cup_{i=1}^N \{x_i, -x_i\}$. (That is, $X$ is essentially a subset of the projective plane, but still, we would rather think of it as a collection of antipodal pairs on the surface of the sphere.) To such a set $X$, we will associate a corresponding {\it orthogonality graph} $G_X$ in the following way: any vertex $v_i$ of $G_X$ is a pair $\{x_i, -x_i\}$, and we join two vertices $v_i$ and $v_j$  by an edge if and only if $\langle x_i, x_j \rangle =0$. For an orthogonal transformation $T\in O(n)$ we say that $T(v_i)=v_j$ if $T(x_i)=x_j$ or $T(x_i)=-x_j$. We will call two sets of vertices $Y=\{v_{i_1}, \dots, v_{i_k}\}$ and $Z=\{v_{j_1}, \dots, v_{j_k}\}$ {\it geometrically congruent}, in notation $Y\cong Z$, if there exists an orthogonal transformation $T$ of $\R^n$ such that $T(Y)=Z$. 

\medskip

After this preparation, let a symmetric finite set $X=\cup_{i=1}^N \{x_i, -x_i\}\subset \Sp^{n-1}$, $|X|=2N$, and a symmetric, orthogonality avoiding set $A\subset \Sp^{n-1}$ with density $\delta(A)=\frac{\omega(A)}{\omega_n}$ be given, and let $\mu$ denote the normalized Haar measure on the orthogonal group $O(n)$. Notice that if $A$ avoids orthogonality and $T$ is an orthogonal transformation of $\R^n$, then the intersection $T(A)\cap X=\cup_{j=1}^k \{ x_{i_j}, -x_{i_j} \}$ is necessarily a collection of antipodal pairs, which corresponds to an independent set $I=\{v_{i_1}, \dots, v_{i_k}\}$ in $G_X$, i.e. a subset of vertices without any edges among them. With a slight abuse of notation we will denote this by $T(A)\cap G_X=I$. As such, it is natural to introduce a probability measure on the collection of independent sets of $G_X$ in the following way. For any independent set of vertices $I\subset G_X$, we define the {\it atomic density}

\begin{equation}\label{ax}
a_X(I)= \mu\Big (T\in O(n): T(A)\cap G_X=I\Big ).    
\end{equation}

We have the following trivial properties of the atomic densities $a_X(I)$: 
\begin{equation}\label{iep}
a_X(I) \ge 0 \ \ {\textrm{for all \ }} I    
\end{equation}
\begin{equation}\label{iet}
\sum_I a_X(I)= 1,    
\end{equation}
i.e., the atomic densities indeed constitute a probability measure. 

\medskip

Also, for any fixed index $i$, the density of $A$ can be expressed as the $\mu$-probability that $v_i\in T(A)$, that is $\mu(T\in O(n):v_i\in T(A))$. This, in turn, is the sum of atomic densities for independent sets $I$ containing $v_i$. Therefore we conclude  
\begin{equation}\label{ie1}
\sum_{v_i\in I} a_X(I)= \delta(A) \ \ {\textrm{for all \ }} i=1, 2, \dots N.    
\end{equation}
In particular, the value of the sum above is the same value for any vertex: 
\begin{equation}\label{ie1b}
\sum_{v_i\in I} a_X(I)= \sum_{v_j\in I} a_X(I) \ \ {\textrm{for all \ }} 1\le i, j\le N.    
\end{equation}

There is one more crucial property of the atomic densities $a_X(I)$, corresponding to the underlying geometric invariance of the problem. Namely, if two sets of vertices $Y,Z\subset G_X$ are geometrically congruent, then the shift-invariance of the Haar measure ensures that $\mu\Big( T\in O(n): Y\subset T(A)\Big)=\mu\Big( T\in O(n): Z\subset T(A)\Big)$. This can be expressed via the atomic densities $a_X(I)$ as follows: 
\begin{equation}\label{iec}
\sum_{I\supset Y} a_X(I)-\sum_{I\supset Z} a_X(I)=0  \ \ {\textrm{whenever \ }} Y\cong Z.
\end{equation}
Note that equation \eqref{ie1b} is a special case of \eqref{iec}, with the choice $Y=\{v_i\}, Z=\{v_j\}$.

Having equations \eqref{iep}, \eqref{iet}, \eqref{ie1}, \eqref{iec}  at hand, we can give an upper bound on $\alpha_n$ by the following linear program: 

\begin{equation}\label{gfcn}
{\textrm{maximize}} \sum_{v_1\in I} a_X(I) \ {\textrm{subject to}} 
\end{equation}
\begin{equation*}
a_X(I) \ge 0 \ \ {\textrm{for all \ }} I, 
\end{equation*}
\begin{equation*}
\sum_I a_X(I)= 1, 
\end{equation*}
\begin{equation*}
\sum_{I\supset Y} a_X(I)-\sum_{I\supset Z} a_X(I)=0  \ \ {\textrm{whenever \ }} Y\cong Z \ {\textrm{in}} \ G_X .
\end{equation*}
Note that we did not include equation \eqref{ie1b} among the constraints because it is formally included in \eqref{iec}. 

\medskip

Let $\Gamma(G_X)$ denote the objective value of the linear program above. In analogy with the terminology in \cite{ambrus} we define the geometric fractional chromatic number $\chi_{gf}(G_X)$ of the graph $G_X$ as  $\chi_{gf}(G_X)=1/\Gamma(G_X)$. 

\medskip

By the discussion above, we have the following upper bound on $\alpha_n$: 

\begin{equation}\label{da}
\alpha_n\le \inf\{ \Gamma(G_X) \ :\ X\subset\Sp^{n-1}, \ X \ {\textrm{is finite, symmetric}}\}.
\end{equation}
It would be interesting to know whether equality holds in \eqref{da}.

\medskip

It is worth mentioning (cf. a similar discussion in \cite{mrvz}) that $\Gamma(G_X)$ is monotonically decreasing with respect to inclusion, i.e. 
\begin{equation}\label{mon}
\Gamma(G_{X_1})\le \Gamma(G_{X_2}) \ {\textrm{for all}} \  G_{X_1}\supset G_{X_2},
\end{equation}
although we will not use this property in this note. 

\medskip

In practice, we only have heuristic guidance as to what properties the set $X$ should possess in order to produce a low value of $\Gamma(G_X)$. First, $X$ should contain many geometric congruences, so as to induce several constraints of the type \eqref{iec}. Second, $X$ should contain many orthogonal pairs $\langle x_i, x_j \rangle =0$, so that the number of independent sets (i.e. the number of variables in the linear program) remains as low as possible. 

Using a computer search described in Subsection \ref{ss:computer search}, we have found a 21-point set $X$ that achieves the bound $\alpha_3 \le \Gamma(G_X)=32/107\approx0.2991$. See Table \ref{tab:GFCN X21 coordinates}.

\section{Harmonic analysis and weak duality}
\label{sec3}

In this section, we will link up the linear program \eqref{gfcn}  with the harmonic analytic version of Delsarte's linear programming bound to produce a sharper upper bound on $\alpha_n$. 

\medskip

Let $t\in [-1, 1]$, and $x,y\in \Sp^{n-1}$ be given such that $\langle x, y \rangle =t$. For any measurable set $A\subset \Sp^{n-1}$ define the function $f_A: [-1,1]\to \R$, as 
\begin{equation}\label{fa}
f_A(t)=\mu\{T\in O(n): \  Tx\in A \  {\textrm{and}} \  Ty\in A\}
\end{equation}
It is clear that the value $f_A(t)$ does not depend on the particular choice of $x$ and $y$. Also, it is easy to see (e.g. \cite[Lemma 4]{dp}) that $f_A$ is continuous and {\it positive definite}. As such, Schoenberg's theorem (\cite{sch}, \cite[Theorem 14.3.3]{dx}) applies, and $f_A$ can be written in the form 
\begin{equation}\label{pdf}
f_A(t)=\sum_{k=0}^\infty c_kP_k(t),  
\end{equation}
where $c_k\ge 0$, and $P_k$ are the Gegenbauer polynomials $C^{(n-2)/2}_k$ of index $(n-2)/2$ and degree $k$, with the normalization $P_k(1)=1$. In particular, for $n=3$, $P_k$ are the Legendre polynomials. 

\medskip

Assume now that $A\subset \Sp^{n-1}$ is symmetric, avoids orthogonality and has density $\delta(A)$. In this case, $f_A$ has the following essential properties: 
\begin{equation}\label{fpos}
f_A(t)\ge 0 \ {\textrm{for all}} \ t\in[-1,1]    
\end{equation}

\begin{equation}\label{cpos}
c_k\ge 0 \ {\textrm{in}} \  \eqref{pdf}    
\end{equation}

\begin{equation}\label{fsym}
f_A(t)=f_A(-t) \ {\textrm{due to}} \ A=-A    
\end{equation}

\begin{equation}\label{codd}
c_k=0 \ {\textrm{for all odd}} \ k  \ {\textrm{due to }} \ \eqref{fsym}     
\end{equation}

\begin{equation}\label{t1}
\sum_{2|k, k=0}^\infty c_k=\delta(A)  \ {\textrm{due to}} \ t=1 \ {\textrm{in}} \ \eqref{pdf}   
\end{equation}

\begin{equation}\label{t0}
\sum_{2|k, k=0}^\infty c_kP_k(0)=0  \ {\textrm{due to}} \ t=0 \ {\textrm{in}} \ \eqref{pdf},  
\end{equation}
the last equation being true because $A$ avoids orthogonality. 

Furthermore, if $X\subset \Sp^{n-1}$ is a finite, symmetric set, and $G_X$ is the corresponding graph introduced in Section \ref{sec2}, then the atomic densities $a_X(I)$ and the coefficients $c_k$ are linked via the following relation: 
\begin{equation}\label{axck}
\sum_{2|k, k=0}^\infty c_kP_k(\langle v_i, v_j \rangle) - \sum_{v_i,v_j\in I} a_X(I) =0 \ {\textrm{for all}} \ 1\le i\le j\le N.
\end{equation}
This last equation needs a little clarification. Recall that $v_i=\{x_i, -x_i\}$, and hence the value of $\langle v_i, v_j \rangle$ is ambiguous. Nevertheless, the value $P_k(\langle v_i, v_j \rangle)$ is well-defined, because the polynomial $P_k$ is an even function for even values of $k$. Thus, the first sum above is the $\mu$-probability of $\{x_i, x_j\}$ being contained in $T(A)$.  The second sum is the $\mu$-probability of $\{v_i, v_j\}=\{x_i, -x_i, x_j, -x_j\}$ being contained in $T(A)$. Due to the symmetry of $X$ and $A$ these two events are the same. 

\medskip

Note that equality $i=j$ is allowed in \eqref{axck}, in which case $P_k(\langle v_i, v_i\rangle)=1$, and thus \eqref{axck} reduces to \eqref{t1} via \eqref{ie1} . 

\medskip

In summary, we obtain an upper bound on $\alpha_n$ by the following (infinite dimensional) linear program: 

\begin{equation}\label{lp2}
{\textrm{maximize}} \  \sum_{k=0}^\infty c_k \ {\textrm{subject to}} \ 
\eqref{fpos},\eqref{cpos},  \eqref{codd}, \eqref{t0}, \eqref{axck}, \eqref{iep}, \eqref{iet}, \eqref{iec}.
\end{equation}

Let $\Delta(G_X)$ denote the objective value (supremum) of the linear program above. By the discussion above, we have the following upper bound on $\alpha_n$:
\begin{equation}\label{pa}
\alpha_n\le \inf\{ \Delta(G_X) \ :\ X\subset\Sp^{n-1}, \ X \ {\textrm{is finite, symmetric}}\}.
\end{equation}

We conjecture that equality holds in \eqref{pa}. Similar convergence results have been proved in \cite{bekker}, but it is not clear whether those results apply to our setting without modification. 

\medskip

Finally, we apply weak duality to the linear program \eqref{lp2} in order to obtain a verifiable witness for an upper bound on $\alpha_n$. 

\begin{theorem}\label{bound}
Assume $X=\{x_1, -x_1, \dots, x_N, -x_N\}\subset \Sp^{n-1}$ is a finite, symmetric subset of the sphere, $v_i=\{x_i, -x_i\}$, $G_X$ is the orthogonality graph corresponding to $X$, and real parameters $q_1$, $s_{Y,Z}$ (for subsets $Y$ and $Z$ of $G_X$, where $Y\cong Z$), $w_0$ and $w_{i,j}$ (for all $1\le i\le j\le N$)  are given such that the inequalities 
\begin{equation}\label{forck}
w_0P_k(0) + \sum_{i\le j}w_{i,j}P_k(\langle v_i, v_j \rangle )\ge 1 \ {\textrm{for all}} \ k\ge 0, \ k \  {\textrm{is even}} 
\end{equation}

\begin{equation}\label{forax}
q_1 - \sum_{v_i, v_j\in I}w_{i,j} + \sum_{I\subset Y}s_{Y,Z} -\sum_{I\subset Z}s_{Y,Z}\ge 0 \ {\textrm{for all}} \ I\subset X, I \  {\textrm{is independent}}
\end{equation}
are satisfied. Then $\alpha_n\le q_1$.    
\end{theorem}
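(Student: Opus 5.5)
The plan is a direct weak-duality computation. Fix a symmetric, orthogonality-avoiding measurable set $A$ with density $\delta(A)$. Let $f_A$ be as in \eqref{fa} with coefficients $c_k$ from \eqref{pdf}, and let $a_X(I)$ be the atomic densities \eqref{ax}. These satisfy all the constraints of \eqref{lp2}, namely \eqref{cpos}, \eqref{codd}, \eqref{t1}, \eqref{t0}, \eqref{axck}, \eqref{iep}, \eqref{iet} and \eqref{iec}. It therefore suffices to show $\delta(A)=\sum_k c_k\le q_1$, after which we take the supremum over $A$.

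First I would bound the objective using \eqref{forck}. Since every $c_k\ge 0$ and only even $k$ appear, multiplying \eqref{forck} by $c_k$ and summing gives
\[
\delta(A)=\sum_{2|k} c_k\le w_0\sum_{2|k}c_kP_k(0)+\sum_{i\le j}w_{i,j}\sum_{2|k}c_kP_k(\langle v_i,v_j\rangle).
\]
Interchanging the sums is legitimate: $|P_k|\le 1$ on $[-1,1]$ and $\sum c_k=\delta(A)<\infty$, so everything converges absolutely. I expect this convergence point to be the only place needing any care. By \eqref{t0} the $w_0$ term vanishes. By \eqref{axck} each inner sum equals $\sum_{I\ni v_i,v_j}a_X(I)$. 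Hence
\[
\delta(A)\le \sum_I a_X(I)\sum_{v_i,v_j\in I}w_{i,j}.
\]

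Next I would use \eqref{forax}. Multiply it by $a_X(I)\ge 0$ (valid by \eqref{iep}) and sum over all independent $I$. By \eqref{iet} the $q_1$ term contributes exactly $q_1$. For the $s_{Y,Z}$ terms, I read the conditions ``$I\subset Y$'' in \eqref{forax} as ``$Y\subset I$'', matching \eqref{iec}, where $\sum_{I\supset Y}a_X(I)$ appears. After reordering the finite sums, each pair $(Y,Z)$ then contributes
\[
s_{Y,Z}\Big(\sum_{I\supset Y}a_X(I)-\sum_{I\supset Z}a_X(I)\Big)=0
\]
by \eqref{iec}. Thus $q_1-\sum_I a_X(I)\sum_{v_i,v_j\in I}w_{i,j}\ge 0$. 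Combined with the previous display this gives $\delta(A)\le q_1$, and hence $\alpha_n\le q_1$ by \eqref{an}.

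There is no real obstacle beyond this bookkeeping. The one step to state carefully is the sum interchange over $k$, which is justified by absolute convergence via $c_k\ge0$ and $|P_k|\le1$. Note that the positivity constraint \eqref{fpos} is not needed for this direction of the argument.
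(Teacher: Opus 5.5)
Your proposal is correct and is essentially the paper's weak-duality argument. The paper forms the single weighted identity $\sum_k \beta_k c_k + \sum_I \gamma_I a_X(I) = q_1$ from \eqref{iet}, \eqref{iec}, \eqref{t0}, \eqref{axck} and then applies $\beta_k\ge 1$, $\gamma_I\ge 0$; you apply the two dual inequalities one after the other, which is the same computation. Your reading of ``$I\subset Y$'' as ``$Y\subset I$'' is what the paper's weighted sum of \eqref{iec} actually produces, and your absolute-convergence justification for the interchange over $k$ supplies a detail the paper leaves implicit.
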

\begin{proof}
The proof is a direct application of linear duality, as follows. Let us use the notation 
$\beta_k=w_0P_k(0)+\sum_{i\le j}w_{i,j}P_k(\langle v_i, v_j \rangle )$, and $\gamma_I=q_1 - \sum_{v_i, v_j\in I}w_{i,j} + \sum_{I\subset Y}s_{Y,Z} -\sum_{I\subset Z}s_{Y,Z}$. Consider the weighted sum of equations \eqref{iet}, \eqref{iec}, \eqref{t0} and \eqref{axck} with respective weights $q_1$, $s_{Y,Z}$, $w_0$ and $w_{i,j}$. This weighted sum gives us $\sum_{2|k, k=0}^\infty \beta_k c_k +\sum_{I\subset X}\gamma_Ia_X(I)= q_1$. Therefore, using the assumptions $\beta_k\ge 1, \gamma_I\ge 0$, equation \eqref{t1}, and the nonnegativity of the variables $c_k$ and  $a_X(I)$ we obtain 
\begin{equation}\label{last}
\delta(A)=\sum_{2|k, k=0}^\infty  c_k \le \sum_{2|k, k=0}^\infty \beta_k c_k +\sum_{I\subset X}\gamma_Ia_X(I) =q_1.       
\end{equation}
\end{proof}

The reader might notice that we have not used the constraint \eqref{fpos} in the formulation and proof of Theorem \ref{bound}. It is simply because, in our experience, this constraint is always satisfied automatically in practice, and therefore does not carry any value in sharpening the bound on $\alpha_n$. 

\medskip

While the proof of Theorem \ref{bound} is straightforward, it is far from trivial to make use of it in practice. The difficulty is twofold. First and foremost, one needs to construct a finite, symmetric set $X\subset \Sp^{n-1}$ which has the potential to give a sharp upper bound on $\alpha_n$ via Theorem \ref{bound}. We achieve this by a large-scale computer search governed by some heuristics. Details of this search are described in the Appendix. 

\medskip

Second, once $X$ is fixed, we need to find the parameters  $q_1$, $s_{Y,Z}$, $w_0$ and $w_{i,j}$ as in Theorem \ref{bound}, and verify that the constraints \eqref{forck} and \eqref{forax} are satisfied. The numerical values of the parameters are found by a linear programming solver, and then rounded to rational numbers for purposes of rigorous verification. The verification of \eqref{forax} is fairly easy as there are only finitely many inequalities, and therefore any minor violation of an inequality can be compensated by increasing the value of $q_1$ slightly. However, \eqref{forck} is a collection of infinitely many constraints, and hence the verification procedure is more involved in this case. Notice that as $k\to \infty$, we have  $P_k(\langle v_i, v_j \rangle ) \to 0$ for all $i\ne j$, while $P_k(\langle v_i, v_i \rangle )=P_k(1)=1$ for all $i$ and $k$. Therefore, as $k\to \infty$, the left hand side of \eqref{forck} converges to $\sum_iw_{ii}$. If this sum is larger than 1, then inequality \eqref{forck} is satisfied  automatically above a threshold $k>k_0$. In order to determine the threshold $k_0$, we use standard estimates for the absolute value of Jacobi polynomials (see e.g \cite{dov}):
\begin{equation}\label{integral}
|P_k(\langle v_i, v_j \rangle )|=|P_k(\cos \theta )|\le \frac{2}{\pi}\int_0^{\pi/2} (1-\sin^2 \theta \cos^2 \phi )^{k/2}d\phi.
\end{equation}
Still, for any $k<k_0$ the verification of \eqref{forck} requires the evaluation of $P_k(\langle v_i, v_j \rangle )$ to a high precision. This task is non-trivial in itself, as $k$ can range up to several thousands. The way out is to use the recursion formula (see \cite{szego})
\begin{equation}\label{recursion}
P_k(u)=\frac{2k-1}{k}uP_{k-1}(u)-\frac{k-1}{k}P_{k-2}(u).    
\end{equation}
The details of the rigorous verification process are given in the Appendix. 

\medskip

Finally, we provide here the finite set $X$ which is used in the proof of Theorem \ref{main}. 

\medskip

{\it Proof of Theorem \ref{main}.} The upper bound $\alpha_3\le 0.2953$ is witnessed by an appropriate finite, symmetric set $X\subset \Sp^{n-1}$, geometric congruences $Y\cong Z$ among some subsets of $G_X$,  and corresponding parameters $q_1, s_{Y,Z}, w_0$ and $w_{i,j}$ satisfying inequalities \eqref{forck} and \eqref{forax} as described in Theorem \ref{bound}. The witness $X$ consists of 33 antipodal pairs, and hence $|G_X|=33$. By definition, the vectors in $X$ are normalized, $\|x_i\|=1$. However, in order to reduce clutter, we present the coordinates of $x_i$ without normalization: all coordinates are of the form $a + \sqrt{5}b: a, b \in Z$. We provide the coordinates of one point from each antipodal pair in Table~\ref{coord}. As for the congruences $Y\cong Z$, and the coefficients parameters $q_1, s_{Y,Z}, w_0$ and $w_{i,j}$, we refer the reader to the supplementary material \cite{supplementary}, where the rigorous verification procedure is documented. \qed

\begin{figure}[htbp]
    \centering
    \includegraphics[width=0.9\linewidth]{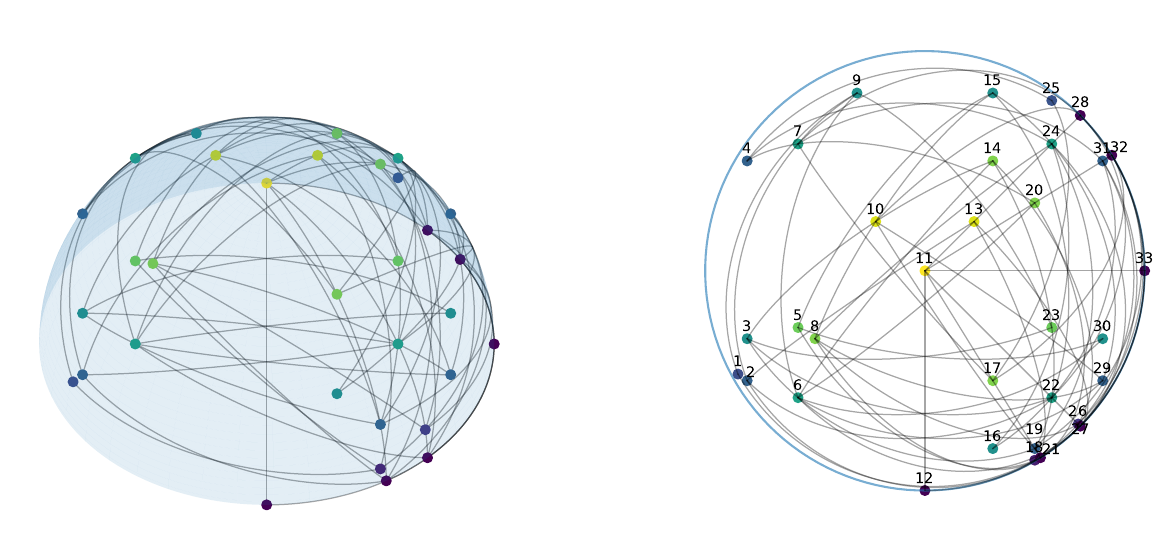}
    \caption{The 33-vertex graph $G_X$ visualized. Left figure slightly elevated viewpoint, right figure orthogonal projection onto the $xy$-plane. Numbers correspond to row indices in Table~\ref{coord}, edges indicate orthogonality, and colors represent the $z$-coordinate value.}
    \label{fig:best33}
\end{figure}


\section{Acknowledgements}

The authors are grateful to Frank Vallentin and Fernando Oliveira de Filho for pointing out the possibility of using the method of GFCN in the context of the Witsenhausen problem. 

M.M. and D.Cz. were supported by grant NKFIH-154121. Á.D., D.V., and P.Zs. were supported by grant NKFIH-153165. D.Cz., D.V., and P.Zs. were supported by the Ministry of Innovation and Technology NRDI Office within the framework of the Artificial Intelligence National Laboratory (RRF-2.3.1-21-2022-00004).

\section{Appendix}

\subsection{Computer search}\label{ss:computer search}

Below we give a high level overview of our graph search. 

\medskip 

The symmetric witness set $X$, for which the value $\Delta(G_X)$ provides the upper bound for $\alpha_3$ as described in Section \ref{sec2}, is obtained by a heuristic search algorithm. Throughout the search we restrict our attention to symmetric sets $X$, and consider any antipodal pair $\{x_i, -x_i\}$ as a single vertex $v_i$ in the orthogonality graph $G_X$, as explained in Section \ref{sec2}.  

\medskip

We start from a promising configuration $G_{X_{0}}$ of small cardinality, and iteratively add and remove vertices until we obtain a sufficiently low value of $\Delta(G_X)$. Below, we refer to  candidate sets $X$ as \emph{nodes}, as in nodes of a search tree, not to be confused with \emph{vertices}, which term is reserved for the elements of $G_X$.

\medskip

Computing $\Delta(G_X)$ is challenging: writing up the linear program requires finding all the independent sets of $G_X$ and all the isometries among them. We work in the space of antipodal pairs, and adapt the isometry checks accordingly. The size of the LP seems to grow exponentially in $|G_X|$, so computing $\Delta(G_X)$ for large graphs is not feasible. We employed the GUROBI solver~\cite{gurobi} on a computer with 1TB of RAM for our graph search, which limited us to circa $|G_X| \leq 33$.

\medskip

The design of the search algorithm was guided by the following considerations:
\begin{itemize}
    \item monotonicity: $\Delta(G_X)$ is monotone w.r.t. set inclusion.
    \item evaluation time: evaluating $\Delta(G_X)$ is roughly exponential in $|G_X|$.
\end{itemize}

\subsubsection{Beam-constrained Monte Carlo Tree Search}

In standard MCTS setups typical in game-tree search \cite{coulom, kocsisszepesvari, silver}, inner nodes have similar evaluation times rather than depending heavily on tree depth.
Hence, we modify the standard MCTS algorithm with a beam search aspect: we extend a node only in the case when its value is at least the k-th best among nodes of the same size.
This trade-off may exclude some promising nodes, but significantly reduces computational demands.

The search starts from a carefully chosen 6-vertex graph $G_{X_0}$ (corresponding to 6 antipodal pairs), see Table~\ref{tab:Sphere_init_coordinates}. We only add vertices that are orthogonal to at least two vertices already present. Consequently, there are $O(n^2)$ possible expansions, which can be computed efficiently by taking cross products of vertex pairs.
Adding new vertices can make others redundant. Therefore, node expansion involves removing vertices as well as adding them. These reduced nodes are inserted into the selection path: they become the parent of the expanded node and the (virtual) child of its grandparent.

Node selection is performed according to the UCB bound \cite{kocsisszepesvari}.
As our target is a simple maximization task and not the standard 2-player min/max-setup, we are looking for the best descendant of the root.
Therefore, during the expansion step, we evaluate all child nodes and backpropagate the maximum value observed among them.
We empirically normalize and rescale target values to the range $[0,1]$.

In the search that found $X$, we use exploration rate 0.1 and beam size 5. The total computational budget is roughly 14 CPU core-years on AMD EPYC 7643 processors.

\subsection{Symbolic verification}

The vertices of $G_X$ are given by closed algebraic expressions, and hence orthogonal pairs and geometric congruences of subsets $Y, Z\subset G_X$ can be determined unambiguously. 

\medskip

After declaring rational values of the parameters $q_1, s_{Y,Z}, w_0$ and $w_{i,j}$ (which we find by linear programming, and slight modifications to account for numerical errors), we must verify that \eqref{forck} and \eqref{forax} are fulfilled. 

\medskip

Necessary tricks for checking \eqref{forck}: 

\medskip

First we find a threshold index $k_0$. For this we use the trivial estimate 
$$|w_0P_k(0)+\sum_{i < j}w_{i,j}P_k(\langle v_i, v_j \rangle )|\le |w_0P_k(0)|+\sum_{i < j}|w_{i,j}| | P_k(\langle v_i, v_j \rangle )|,$$
and for each term we use the estimate 
$$|P_k(\langle v_i, v_j \rangle )|=|P_k(\cos \theta_{i,j} )|\le \frac{2}{\pi}\int_0^{\pi/2} (1-\sin^2 \theta_{i,j} \cos^2 \phi )^{k/2}d\phi.$$

For each fixed $\theta$ the integral is monotonically decreasing in $k$, so we first aim to find a particular $k_0$ such that 
\begin{equation}\label{k0}
\frac{2|w_0|}{\pi} \int_0^{\pi/2} (1-\cos^2 \phi )^{k_0/2}d\phi+
\frac{2}{\pi}\sum_{i< j}|w_{i,j}| \int_0^{\pi/2} (1-\sin^2 \theta_{i,j} \cos^2 \phi )^{k_0/2}d\phi\le \sum_i w_{ii}-1.
\end{equation}
In order to find such a $k_0$, we observe that for each fixed $\theta$ and $k$, the function $(1-\sin^2 \theta \cos^2 \phi )^{k/2}$ is monotonically increasing (as a function of $\phi$) on $[0, \pi/2]$, so the integral is easy to estimate by a Riemannian sum.  With this, we find the threshold $k_0$. Finally, for any $k\ge k_0$ we automatically have 

$w_0P_k(0)+\sum_{i\le j}w_{i,j}P_{k}(\langle v_i, v_j \rangle )=
\sum_iw_{i,i}+
w_0P_k(0)+\sum_{i< j}w_{i,j}P_{k}(\langle v_i, v_j \rangle )\ge \\\sum_iw_{i,i}-\left (|w_0P_k(0)|+\sum_{i < j}|w_{i,j}| | P_k(\langle v_i, v_j \rangle )|\right )\ge \\
\sum_iw_{i,i}-\left (
\frac{2|w_0|}{\pi} \int_0^{\pi/2} (1-\cos^2 \phi )^{k_0/2}d\phi+
\frac{2}{\pi}\sum_{i< j}|w_{i,j}| \int_0^{\pi/2} (1-\sin^2 \theta_{i,j} \cos^2 \phi )^{k_0/2}d\phi
\right )\ge
1$ 

by equation \eqref{k0}, and hence \eqref{forck} is satisfied for all $k\ge k_0$. 
\medskip

Next, we must check \eqref{forck} for all $k<k_0$. This requires high-precision values of $P_k(\langle v_i, v_j \rangle)$. We obtain these using interval arithmetic: we compute $\langle v_i, v_j \rangle$ to precision $10^{-7000}$ and then propagate intervals through the recursion \eqref{recursion}.

\medskip



\begin{table}[]
    \centering
{\tiny
        \begin{tabular}{|c|c|c|}
        \hline
    
$-2\sqrt{5}$ & $2 - 2\sqrt{5}$ & $-1 + \sqrt{5}$ \\
\hline
$-1 - \sqrt{5}$ & $-2$ & $-1 + \sqrt{5}$ \\
\hline
$-1 - \sqrt{5}$ & $1 - \sqrt{5}$ & $2$ \\
\hline
$-1 - \sqrt{5}$ & $2$ & $-1 + \sqrt{5}$ \\
\hline
$-\sqrt{5}$ & $-1$ & $3$ \\
\hline
$-1$ & $-1$ & $1$ \\
\hline
$-1$ & $1$ & $1$ \\
\hline
$-2$ & $1 - \sqrt{5}$ & $1 + \sqrt{5}$ \\
\hline
$1 - \sqrt{5}$ & $1 + \sqrt{5}$ & $2$ \\
\hline
$-1$ & $1$ & $2 + \sqrt{5}$ \\
\hline
$0$ & $0$ & $1$ \\
\hline
$0$ & $-1$ & $0$ \\
\hline
$1$ & $1$ & $2 + \sqrt{5}$ \\
\hline
$-1 + \sqrt{5}$ & $2$ & $1 + \sqrt{5}$ \\
\hline
$-1 + \sqrt{5}$ & $1 + \sqrt{5}$ & $2$ \\
\hline
$-1 + \sqrt{5}$ & $-1 - \sqrt{5}$ & $2$ \\
\hline
$-1 + \sqrt{5}$ & $-2$ & $1 + \sqrt{5}$ \\
\hline
$2\sqrt{5}$ & $-1 - 3\sqrt{5}$ & $3 - \sqrt{5}$ \\
\hline
$2$ & $-1 - \sqrt{5}$ & $-1 + \sqrt{5}$ \\
\hline
$2$ & $-1 + \sqrt{5}$ & $1 + \sqrt{5}$ \\
\hline
$2$ & $-1 - \sqrt{5}$ & $0$ \\
\hline
$1$ & $-1$ & $1$ \\
\hline
$\sqrt{5}$ & $-1$ & $3$ \\
\hline
$1$ & $1$ & $1$ \\
\hline
$\sqrt{5}$ & $3$ & $1$ \\
\hline
$1$ & $-1$ & $-2 + \sqrt{5}$ \\
\hline
$1$ & $-1$ & $0$ \\
\hline
$1$ & $1$ & $0$ \\
\hline
$1 + \sqrt{5}$ & $-2$ & $-1 + \sqrt{5}$ \\
\hline
$1 + \sqrt{5}$ & $1 - \sqrt{5}$ & $2$ \\
\hline
$1 + \sqrt{5}$ & $2$ & $-1 + \sqrt{5}$ \\
\hline
$1 + \sqrt{5}$ & $2$ & $0$ \\
\hline
$1$ & $0$ & $0$ \\
\hline

\end{tabular}
}

    \caption{The homogeneous coordinates of the 33-vertex graph $G_X$ that achieves the upper bound $q_1=0.2953$. We list only one point from each antipodal pair $\{x_i, -x_i\}$}
    \label{coord}
\end{table}

\begin{table}[]
    \centering
{\tiny
\begin{tabular}{|c|c|c|}
\hline
$-1$ & $1$ & $1$ \\
\hline
$1$ & $-1$ & $0$ \\
\hline
$1$ & $-1$ & $1$ \\
\hline
$0$ & $-2$ & $1+\sqrt{5}$ \\
\hline
$-2$ & $1+\sqrt{5}$ & $0$ \\
\hline
$-1-\sqrt{5}$ & $0$ & $2$ \\
\hline
\end{tabular}
}
\caption{Homogeneous coordinates of the initial graph $G_{X_0}$ with six starting vertices.}
    \label{tab:Sphere_init_coordinates}
\end{table}

\begin{table}[]
    \centering
    {\tiny
\begin{tabular}{|c|c|c|}
        \hline
    
$-1 - \sqrt{5}$ & $0$ & $2$ \\
\hline
$-1 - \sqrt{5}$ & $1 - \sqrt{5}$ & $2$ \\
\hline
$-1$ & $1$ & $1$ \\
\hline
$-2$ & $-1 - \sqrt{5}$ & $-1 + \sqrt{5}$ \\
\hline
$-2$ & $1 + \sqrt{5}$ & $-1 + \sqrt{5}$ \\
\hline
$0$ & $1 - \sqrt{5}$ & $1 + \sqrt{5}$ \\
\hline
$0$ & $0$ & $1$ \\
\hline
$0$ & $-1 + \sqrt{5}$ & $1 + \sqrt{5}$ \\
\hline
$0$ & $1 + \sqrt{5}$ & $2$ \\
\hline
$0$ & $1 + \sqrt{5}$ & $-1 + \sqrt{5}$ \\
\hline
$0$ & $1$ & $0$ \\
\hline
$-1 + \sqrt{5}$ & $1 + \sqrt{5}$ & $0$ \\
\hline
$2$ & $1 + \sqrt{5}$ & $-1 + \sqrt{5}$ \\
\hline
$2$ & $-1 - \sqrt{5}$ & $-1 + \sqrt{5}$ \\
\hline
$2$ & $1 - \sqrt{5}$ & $1 + \sqrt{5}$ \\
\hline
$2$ & $0$ & $1 + \sqrt{5}$ \\
\hline
$1$ & $-1$ & $1$ \\
\hline
$1$ & $1$ & $1$ \\
\hline
$1 + \sqrt{5}$ & $2$ & $0$ \\
\hline
$3 + 3\sqrt{5}$ & $3 + \sqrt{5}$ & $2$ \\
\hline
$1$ & $0$ & $0$ \\
\hline

\end{tabular}
}
    \caption{The homogeneous coordinates of the 21-vertex graph $G_X$ that achieves the value $\Gamma(G_X)=32/107\approx0.2991$.}
    \label{tab:GFCN X21 coordinates}
\end{table}

\clearpage

\end{document}